\newtheorem{teo}{Theorem}[section]
\newtheorem{lema}[teo]{Lemma}
\newtheorem{cor}[teo]{Corollary}
\newtheorem{defi}[teo]{Definition}
\newtheorem{remark}[teo]{Remark}
\newcommand{\reais}[1]{\mathbb R^{#1}}
\newcommand{\inte}[1]{\mathbb Z^{#1}}
\newcommand{\toro}[1]{\mathbb T^{#1}}
\newcommand{\vertiii}[1]{{\left\vert\kern-0.25ex\left\vert\kern-0.25ex\left\vert #1 
    \right\vert\kern-0.25ex\right\vert\kern-0.25ex\right\vert}}
\newcommand{\ar}{\rightarrow}
\newcommand{\twopartdef}[4]
{
	\left\{
		\begin{array}{ll}
			#1 & \mbox{,  if } #2 \\
			#3 & \mbox{, if } #4
		\end{array}
	\right.
}
\newcommand{\cone}[4]
{
\mathcal C^{#1}_{#2}(#3,#4)
}
\title{Central Lyapunov exponent of partially hyperbolic diffeomorphisms of $\toro{3}$}
\author{G. Ponce}
\address{Departamento de Matem\'atica,
  ICMC-USP S\~{a}o Carlos-SP, Brazil.}
  \email{gaponce@icmc.usp.br}
\author{A. Tahzibi} 
\address{Departamento de Matem\'atica,
  ICMC-USP S\~{a}o Carlos-SP, Brazil.}
\email{tahzibi@icmc.usp.br}
\date{}                                          
\thanks{G. Ponce is enjoying a Doctoral scholarship of FAPESP. A. Tahzibi had the support of CNPq and FAPESP}
\begin{document}
\maketitle

\begin{abstract}
In this paper we construct some ``pathological" volume preserving partially hyperbolic diffeomorphisms on $\toro{3}$ such that their behaviour in small scales in the central direction (Lyapunov exponent) is opposite to the behavior of their linearization. These examples are isotopic to Anosov. We also get partially hyperbolic diffeomorphisms isotopic to Anosov (consequently with non-compact central leaves) with zero central Lyapunov exponent at almost every point.
\end{abstract}


\section{Introduction and statement of the result}
The ergodic theory of ``beyond uniformly hyperbolic dynamics" is an extensive research area and has connection to many other topics.  Partial hyperbolicity is a form of relaxing the uniform hyperbolicity condition with natural interesting examples (see  \cite{BPSW},  \cite{HHUsurvey}).  One of the amazing issues raising in the study of ergodic properties of partially hyperbolic dynamics is the existence of invariant foliations and their topological and metric properties. A complete comprehension of invariant foliations by partially hyperbolic dynamics is also an important tool for the classification 
of these dynamics and the manifolds which support them.

In this paper we introduce some new ``pathological" examples of partially hyperbolic diffeomorphisms. We study the relationship between central Lyapunov exponents and topology of leaves of central foliation of a partially hyperbolic diffeomorphism and its linearization. More precisely, we find an open set of partially hyperbolic diffeomorphisms $f: \mathbb{T}^3 \rightarrow \mathbb{T}^3$ isotopic to linear Anosov diffeomorphisms $A$ such that the central Lyapunov exponent of $f$ is positive almost everywhere while the central bundle of $A$ is contracting. This opposite behavior in the asymptotic growth (manifested by the sign of Lyapunov exponent) which is a local issue contrasts with the compatible behavior in the large scale between $f$ and $A.$ (See Preliminaries Section.)

We also obtain examples of partially hyperbolic diffeomorphisms isotopic to Anosov with non compact central leaves and zero central Lyapunov exponent almost everywhere. This is also a contrast between  the global topology of central leaves of non linear and linearization of a partially hyperbolic diffeomorphism. . 

\begin{teo} \label{teo1}

There exists an open set of volume preserving partially hyperbolic diffeomorphisms $U$ such that for any $f \in U$ almost every point has positive Lyapunov exponent and the linearization of $f$ is an Anosov diffeomorphism with splitting $E^u \oplus E^s \oplus E^{ss}.$
\end{teo}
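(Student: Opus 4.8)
The plan is to start from a well-chosen linear model and deform it. Fix a hyperbolic automorphism $A$ of $\toro{3}$ induced by an integer matrix with three distinct positive real eigenvalues $\sigma < \lambda < 1 < \mu$ and determinant one; here $E^u$ is the $\mu$-eigenline, $E^{ss}$ the $\sigma$-eigenline, and $E^s$ the $\lambda$-eigenline, which we regard as the \emph{center}. Then $A$ has exactly the splitting $E^u\oplus E^s\oplus E^{ss}$ demanded by the statement, and its central exponent is the \emph{negative} number $\log\lambda$, so the linear model has the ``wrong'' sign. I would choose $A$ so that $\mu$ is large (equivalently $\sigma$ small), leaving ample room above $1$ inside the admissible center-cone rates $(\sigma,\mu)$. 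Two of the three required properties are already robust: being partially hyperbolic with one-dimensional center is $C^1$-open, and the linearization, being the action on $H_1(\toro{3})$, is locally constant; hence any diffeomorphism in the same isotopy class as $A$ and sufficiently tame has linearization $A$ with the stated splitting. Thus it suffices to produce \emph{one} volume-preserving $f_0$, isotopic to $A$ and partially hyperbolic, with the correct exponent behaviour, and then take $U$ to be a neighborhood of $f_0$.

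To build $f_0$ I would apply a Baraviera--Bonatti-type deformation in the center direction, keeping the Jacobian equal to one so as to preserve volume. Supported so as to raise the central derivative $\|Df|_{E^c}\|$ above $1$ on a set of large measure while keeping the center rate strictly inside $(\sigma,\mu)$ pointwise, such a deformation preserves partial hyperbolicity and the isotopy class. Because the room above $1$ was made large by the choice of $A$, the deformation need not be $C^1$-small, so the continuous functional $f\mapsto\int \log\|Df|_{E^c_f}\|\,dm$ can be pushed from its original negative value $\log\lambda$ across zero to a strictly positive value. I would then smooth $f_0$ to $C^\infty$ while retaining all these open properties.

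To upgrade ``positive integral'' to ``positive almost everywhere'' I would invoke ergodicity. Since the center is one-dimensional, center bunching is automatic, so by the Burns--Wilkinson criterion essential accessibility implies ergodicity with respect to $m$. Accessibility, indeed stable accessibility, is $C^1$-open and dense among partially hyperbolic diffeomorphisms by Dolgopyat--Wilkinson, so I may assume $f_0$ stably accessible, hence stably ergodic. For an ergodic volume-preserving diffeomorphism with one-dimensional center, the central Lyapunov exponent is the Birkhoff average of $\log\|Df|_{E^c}\|$, which equals the integral $\int\log\|Df|_{E^c_f}\|\,dm$ at almost every point; by our construction this value is strictly positive.

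Finally, openness of $U$ follows because it is the intersection of three $C^1$-open conditions near $f_0$: partial hyperbolicity (together with local constancy of the linearization), stable accessibility, and the strict inequality $\int\log\|Df|_{E^c_f}\|\,dm>0$, the latter being open because $E^c_f$ varies continuously in the $C^1$ topology so the functional is continuous. Every $f\in U$ is then ergodic with central exponent constant and equal to this positive value almost everywhere, and has linearization $A$ with the splitting $E^u\oplus E^s\oplus E^{ss}$. I expect the genuine difficulty to lie in the deformation step: arranging a volume-preserving, partial-hyperbolicity-preserving map whose \emph{net} effect on the integrated central exponent is a definite positive amount, simultaneously respecting $\det Df=1$, keeping the center rate within the admissible cone at every point, and producing an honest sign change rather than a mere infinitesimal increase.
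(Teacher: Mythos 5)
Your overall architecture (a linear Anosov model whose weak-stable direction plays the role of the center, a volume-preserving local deformation to push the integrated central exponent across zero, ergodicity to upgrade ``positive on average'' to ``positive almost everywhere'', and openness of all the conditions) matches the paper's, and your ergodicity step via accessibility plus Burns--Wilkinson is a legitimate alternative to the paper's cleaner route through the Hammerlindl--Ures dichotomy (non-ergodic implies zero central exponent a.e., which is incompatible with $\sigma^c>0$). However, there is a genuine gap exactly where you flag ``the genuine difficulty'': the deformation step as you describe it does not work for a \emph{single fixed} $A$. A Baraviera--Bonatti perturbation is supported in a small ball $B_r(p)$ and increases $\int\log\|Df|_{E^c}\|\,dm$ by at most roughly $vol(B_r)\cdot(-I(h))$, a quantity bounded by the volume of the support times a constant controlled by $\|h-Id\|_{C^1}$ --- and $\|h-Id\|_{C^1}$ is itself capped by the requirement that $f\circ h_r$ remain partially hyperbolic. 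For a fixed $A$ the central eigenvalue $\lambda$ is a fixed number in $(0,1)$, so $|\log\lambda|$ is a fixed positive quantity, and nothing in your argument shows that the achievable gain exceeds it; choosing $\mu$ large and $\sigma$ small says nothing about $\lambda$ (with $\sigma\lambda\mu=1$ one can still have $\lambda$ far from $1$). Your alternative suggestion of raising the central derivative above $1$ ``on a set of large measure'' is not what a Baraviera--Bonatti local perturbation does, and you give no construction that achieves it while preserving volume and partial hyperbolicity.

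The key idea you are missing is the paper's use of a \emph{sequence} $A_k$ of integer matrices whose eigenvalues satisfy $\lambda^c_k\to 1$, $\lambda^u_k\to\infty$, $\lambda^s_k\to 0$, with eigenvectors converging to a fixed orthonormal frame. Then three things happen simultaneously: (i) the domination ratio $\Theta_k=\min\{|\lambda^u_k|/|\lambda^c_k|,\ |\lambda^c_k|/|\lambda^s_k|\}$ grows with $k$, so a perturbation $h$ of \emph{uniform} size (independent of $k$) keeps $f_k\circ h_{k,r}$ absolutely partially hyperbolic (the paper's Lemma \ref{parcialmente}); (ii) the Baraviera--Bonatti lower bound $vol(B_r)\bigl(-I(h)-C_k\alpha_k^{n_r}\bigr)$ stays bounded away from $0$ because $\alpha_k=\lambda^c_k/\lambda^u_k\to 0$ and $C_k$ is uniformly controlled; and (iii) the deficit to overcome, $|\log\lambda^c_k|$, tends to $0$. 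Only the combination of these three facts lets a fixed-size local perturbation flip the sign of the integrated central exponent. Without the family $A_k$ (or some other mechanism producing an arbitrarily small $|\log\lambda^c|$ together with arbitrarily strong domination), your construction of $f_0$ is not established, and the rest of your argument has nothing to stand on.
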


\begin{teo} \label{teo2} There exist volume preserving partially hyperbolic diffeomorphism $f:\toro{3} \ar \toro{3}$ (isotopic to linear Anosov automorphism) with zero central Lyapunov exponent for Lebesgue almost every point of $\toro{3}$ and non-compact central leaves.
\end{teo}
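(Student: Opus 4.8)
The plan is to construct a volume-preserving partially hyperbolic diffeomorphism $f:\toro{3}\ar\toro{3}$ isotopic to a linear Anosov automorphism $A$ with splitting $E^u\oplus E^s\oplus E^{ss}$, so that the center direction corresponds to the weaker stable direction $E^s$ of $A$, and to arrange that the central Lyapunov exponent vanishes Lebesgue-almost everywhere. The natural strategy is to start from the hyperbolic linear map $A$ (where the central exponent is strictly negative) and perform a local perturbation supported in a small ball. On one part of the torus the perturbation should push the central exponent up toward positive values, while on a complementary region it remains negative; the goal is to calibrate the deformation so that the \emph{average} behaviour along almost every central leaf is exactly zero. The key mechanism is to realize the logarithm of the central derivative as a coboundary, or more precisely to make the integral of the central Jacobian with respect to the (unique, by ergodicity) invariant volume vanish, so that by Birkhoff's theorem the time average of $\log\|Df|E^c\|$ is zero at almost every point.

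\emph{First} I would fix the hyperbolicity by ensuring the perturbation is $C^1$-small enough to preserve the dominated splitting $E^u\oplus E^c$ (with $E^c$ integrating the former $E^s\oplus E^{ss}$ structure restricted to the chosen center), invoking structural stability of the partially hyperbolic cone conditions so that $f$ remains partially hyperbolic and isotopic to $A$; this also forces $f$ to have the same linearization $A$ and hence, via the semiconjugacy discussed in the Preliminaries, non-compact central leaves inherited from the non-compactness of the leaves of the irrational linear foliation $E^s_A$. \emph{Second}, volume preservation must be maintained throughout, which constrains the perturbation to the group of volume-preserving diffeomorphisms; a convenient device is to choose the perturbation to be the time-one map of a volume-preserving flow, or a composition with a volume-preserving shear, so that Lebesgue measure remains the natural invariant measure and, by ergodicity of $f$ (which should follow from accessibility and the Pugh--Shub machinery, or directly from the near-Anosov structure), it is the unique one governing the Lyapunov exponents.

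\emph{Third}, and this is the analytic heart of the argument, I would compute the central exponent as
\begin{equation*}
\lambda^c(f)=\int_{\toro{3}}\log\|Df|_{E^c}\|\,d\mathrm{Leb},
\end{equation*}
and tune a one-parameter family $f_t$ interpolating between $A$ (with $\lambda^c<0$) and a map with $\lambda^c>0$ obtained by strengthening the perturbation in the positive regime. By continuity of the integrated central exponent in $t$ (which holds because the center bundle varies continuously and the map is $C^1$-close), the intermediate value theorem produces a parameter $t_0$ with $\lambda^c(f_{t_0})=0$. Since Lebesgue is ergodic, the almost-everywhere pointwise exponent equals this vanishing integral, giving zero central exponent Lebesgue-almost everywhere.

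The main obstacle I expect is the delicate interplay between ergodicity and the vanishing-average condition. Continuity of the integrated exponent is routine, but to conclude that the \emph{pointwise} exponent is zero almost everywhere (rather than merely averaging to zero over a set of positive and negative values) one genuinely needs ergodicity of the constructed $f_{t_0}$, and establishing essential accessibility (hence ergodicity by Burns--Wilkinson) for the perturbed map while simultaneously pinning the average exponent to zero is the most subtle step. A secondary difficulty is guaranteeing that the non-compactness of the central leaves survives the perturbation; this should follow from the leaf-conjugacy to the linear center foliation, but it must be checked that the perturbation does not create compact center leaves, which I would handle using the quasi-isometry property of the center foliation lifted to the universal cover $\reais{3}$.
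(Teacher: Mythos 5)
Your overall skeleton --- start from a linear Anosov automorphism whose weak stable direction plays the role of the center, apply a Baraviera--Bonatti type local perturbation to raise the integrated central exponent, and use the intermediate value theorem along the perturbation parameter to pin the average at zero --- is the same as the paper's. But there are two genuine gaps.

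\textbf{First, and most seriously:} you rely on ergodicity of the limiting map $f_{t_0}$ to upgrade ``integrated central exponent equals zero'' to ``pointwise central exponent equals zero almost everywhere,'' and you correctly identify this as the subtle step --- but you have no way to carry it out. For a volume preserving partially hyperbolic diffeomorphism with zero central exponent, accessibility is not available by soft arguments, and indeed the paper explicitly states that ergodicity of the maps in Theorem \ref{teo2} is an open problem. The paper sidesteps ergodicity entirely by invoking the Hammerlindl--Ures dichotomy (Theorem \ref{HamUres}): a $C^{1+\alpha}$ volume preserving partially hyperbolic diffeomorphism of $\toro{3}$ homotopic to a hyperbolic automorphism is either ergodic --- in which case the pointwise exponent is constant a.e.\ and equals the vanishing average --- or non-ergodic, in which case that theorem directly asserts the central exponent is zero almost everywhere. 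Either branch yields the conclusion without ever deciding ergodicity. Your argument, as written, does not close without this substitute.

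\textbf{Second:} you propose to perturb a single fixed Anosov automorphism $A$ and ``strengthen the perturbation in the positive regime'' until the average central exponent becomes positive. This glosses over a quantitative obstruction: the Baraviera--Bonatti gain is bounded below by $vol(B_r)\bigl(-I(h) - C\alpha^{n_r}\bigr)$ with $\alpha=\lambda^c/\lambda^u$, it must exceed $|\log\lambda^c_A|$ to change the sign, and the size of the admissible perturbation $h$ is capped by the requirement that $f\circ h_r$ remain partially hyperbolic (the cone argument of Lemma \ref{parcialmente1}). For a fixed $A$ there is no reason these constraints are compatible. The paper resolves this by working with the whole family $A_k$, chosen so that $\lambda^c_k\to 1$ (the deficit $|\log\lambda^c_k|$ to be overcome tends to zero) and $\lambda^u_k/\lambda^c_k\to\infty$ (the domination improves, so a perturbation of fixed size $\eta$ is admissible for all large $k$ and the error term $C\alpha_k^{n_r}$ is negligible). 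Without some version of this tuning, the sign change your intermediate value argument needs is not established. The non-compactness of central leaves, on the other hand, you handle essentially as the paper does, via leaf conjugacy to the linearization.
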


There are important questions on our pathological examples which are not known by us for the moment. 
We mention that by a recent result of A. Hammerlindl and R. Ures \cite{HU}, a non-ergodic volume preserving isotopic to Anosov diffeomorphism on $\mathbb{T}^3$, if it exists, should have zero central Lyapunov exponent almost everywhere. 
The ergodicity of the diffeomorphisms with zero central exponent in Theorem \ref{teo2} is an open problem.

Another interesting issue is related to absolute continuity of central foliation. We do not know whether the central foliation of diffeomorphisms with zero central exponent in Theorem \ref{teo2} is absolutely continuous (see \cite{PVW} for a survey about absolute continuity) or not. We believe that it is not the case and formulate the following question.

{\bf Question 1: } Let $f:M \ar M$ be a partially hyperbolic diffeomorphism of $M=\toro{3}$ with absolutely continuous central foliation $\mathcal F^c$ and central Lyapunov exponent equal to zero at Lebesgue almost every point ($\lambda^c = 0$ Lebesgue-a.e). Is it true that the leaves of $\mathcal F^c$ are compact? 

We remark that A. Tahzibi and F. Micena \cite{MT} recently gave an affirmative answer to this question assuming  $\mathcal F^c$ satisfies a uniformly bounded density condition which is a regularity condition stronger than leafwise absolute continuity.


The idea of the proof of Theorems \ref{teo1} and \ref{teo2} is to take a family of hyperbolic linear automorphisms $f_k: \toro{3} \ar \toro{3}$ with eigenvalues $\lambda^s_k < \lambda^c_k < 1 < \lambda^u_k$ in such a way that $$\lambda^s_k \ar 0, \lambda^c_k \ar 1, \lambda^u_k \ar \infty$$
as $k \ar \infty $ and moreover the corresponding unitary eigenvectors converge to a fixed orthonormal basis. Then we apply Baraviera-Bonatti \cite{BB} method of local perturbations and by the choice of the Anosov automorphisms, we are able to show that this local perturbation yields a new partially hyperbolic diffeomorphism with positive central Lyapunov exponent in average. By continuity argument we find some isotopic to Anosov and partially hyperbolic diffeomorphism with vanishing integral of central Lyapunov exponent. 

Finally, by  \cite{HU}  the diffeomorphisms obtained in Theorem \ref{teo1} are ergodic and so almost every point have the same Lyapunov exponent. For diffeomorphisms obtained in Theorem \ref{teo2}, with vanishing average of central exponent, without proving the ergodicity we obtain vanishing Lyapunov exponent almost everywhere.

\section{Preliminaries}

Let $M$ be a compact smooth manifold. A diffeomorphism $f: M \ar M$  is partially hyperbolic if there exists a $Df$-invariant splitting of the tangent bundle
$$TM=E^s \oplus E^c \oplus E^u$$ such that $Df$ uniformly expands all vectors in $E^u$ and uniformly contracts all vectors in $E^s.$ While vectors in $E^c$ are neither contracted as strongly as any nonzero vector in $E^s$ nor expanded as strongly as any nonzero vector in $E^u.$ $f$ is called {\bf absolutely partially hyperbolic} if the domination property between the three mentioned sub bundles is uniform on the whole manifold, i.e. there are constants $a, b > 0$ such that for all $x \in M$ and any unit vectors $v^*, * \in \{s, c, u\}$ in $T_x M$
$$
 \|D_xf(v^s)\| < a < \|D_xf (v^c)\| < b < \|D_xf(v^u)\|.
$$
Absolutely partial hyperbolicity conditions can be expressed equivalently in terms of invariant cone families. In the appendix we make some precise statements. 
In this paper we always deal with absolutely partially hyperbolic diffeomorphisms. 

For all partially hyperbolic diffeomorphisms, there are foliations $\mathcal F^{\tau}, \tau=s,u$ tangent to the sub-bundles $E^{\tau}, \tau=s,u$ called stable and unstable foliation respectively. On the other hand, the integrability of the central sub-bundle $E^c$  is a subtle issue and is not the case in general partially hyperbolic setting (see \cite{HHU}). By M. Brin, D. Burago, S. Ivanov \cite{BBI} all absolutely partially hyperbolic diffeomorphisms on $\toro{3}$ admit a central foliation tangent to $E^c.$ 

Let $f: \toro{3} \rightarrow \toro{3}$ be a partially hyperbolic diffeomorphism. Consider $f_* : \mathbb{Z}^3 \rightarrow \mathbb{Z}^3$ the action of $f$ on the fundamental group of $\toro{3}.$ $f_*$ can be extended to $\mathbb{R}^3$ and the extension is the lift of  a unique linear automorphism $A : \toro{3} \rightarrow \toro{3}$ which is called the linearization of $f.$ It can be proved that $A$ is a partially hyperbolic automorphism of torus (\cite{BBI}). It is not difficult to see that in large scale $f$ and $A$ behaves similarly (see \cite{Htese}, corollary $2.2$). More precisely, for each $k \in \mathbb{Z}$ and $C > 1$ there is an $M > 0 $ such that for all $x, y \in \mathbb{R}^3$,
$$
 \|x - y\| > M \Rightarrow \frac{1}{C} < \frac{\|\tilde{f}^k(x) - \tilde{f}^k(y)\|}{\|A^k(x) - A^k(y)\|} < C.
$$ where $\tilde{f}: \mathbb{R}^3 \ar \mathbb{R}^3$ is the lift of $f$ to $\mathbb{R}^3.$
The examples in the open set $U$ of Theorem \ref{teo1} shows that in the infinitesimal scale opposite behaviors can occur.

A. Hammerlindl proves that any absolutely partially hyperbolic diffeomorphism $f$ on $\toro{3}$ is leaf conjugated to its linearization (for higher dimensions see \cite{H}). In particular the central leaves of $f$ are all homeomorphic.

It is easy to see that a linear partially hyperbolic diffeomorphism of $\mathbb{T}^3$ is either Anosov or all of the leaves of $\mathcal F^c$ are compact, i.e, homeomorphic to $\mathbb{S}^1.$ In the latter case the central eigenvalue is equal to one.   In  theorem \ref{teo2} we give example of partially hyperbolic diffeomorphisms with zero central Lyapunov exponent almost everywhere and non compact leaves.


\noindent{\bf Acknowledgement.}  
We would like to thank C. Bonatti for very helpful suggestions (during the conference Beyond Uniform Hyperbolicity 2011- Marseille) for the conjecture of the second named author's talk in that conference. We are gratefull to A. Hammerlindl for elegant mathematical suggestions for the presentation of our results.  We also would like to thank R. Varão for carefully reading the manuscript of this article. 

\section{Local Perturbation} \label{LP}

In this section we describe briefly a local perturbation process introduced by A. Baraviera, C. Bonatti \cite{BB}. For any partially hyperbolic diffeomorphism $f_0$ they construct a $C^1$-arc of diffeomorphisms $\{f_r\}$ for which the integral of central Lyapunov exponent of $f_r$  is strictly bigger than integral of the central exponent of $f_0$. 

In \cite{BB},  this perturbation procedure is made in a general case. Here we will use  the perturbation argument just for the linear case.

 Let  $f:\toro{3} \ar \toro{3}$   be a volume preserving, linear partially hyperbolic diffeomorphism. Denote by $\lambda^s < \lambda^c < \lambda^u$ the eigenvalues of $f$ and its unitary eigenvectors by $e_s,e_c,e_u$ respectively. Thus,  the directions of $e_s,e_c,e_u$ are the directions of the subbundles $E^s,E^c,E^u$. Let $p$ be a non-fixed point. We take a $C^1-$local coordinate system  on a neighborhood $V$ centered at $p$ such that $\{e_s, e_c, e_u\}$ are directed by $\frac{\partial}{\partial x},  \frac{\partial}{\partial y}, \frac{\partial}{\partial z}.$ Moreover, the expression of volume form on $\toro{3}$ coincides with the Lebesgue measure on $\mathbb{R}^3.$
 
Let $B_1(0)$ be the unit ball of $\mathbb{R}^3.$ Given any ball $B_r(p)$ inside $V$ we denote by $\varphi_{r} : B_{r}(p) \ar B_1(0)$ the diffeomorphism which in local coordinates is a homothety of ratio $\frac{1}{r}.$ More precisely, if $\pi: V \rightarrow \mathbb{R}^3$ is the mentioned coordinate system $\varphi_r(x):=  \frac{\pi(x)}{r}.$

 Let $h:B_1(0) \ar B_1(0)$,  $h \ne Id$, a volume preserving diffeomorphism which preserves the $x$- direction, and equal to the identity on a neighborhood of the boundary of $B_1(0)$.\\
We define the diffeomorphism $h_{r}: \toro{3} \ar \toro{3}$ by:

\begin{equation} \label{normalize} h_{r}(w) = \twopartdef {w}{w \notin  B_{r}(p);}{ \varphi_{r} ^{-1} \circ h \circ \varphi_{r} (w) }{ w \in   B_{r}(p).  }   
\end{equation}

Finally, we define the arc of diffeomorphisms $\{f_r\}_{r\in [0,1]}$ by:
\begin{equation} \label{normalize2} f_{r} := f \circ h_{r}.
\end{equation}

Also, we take $h$ to satisfy
$$||h-Id||_{C^1} < 1.$$

 Since $h$ preserves the direction of $e_s$ we can write
\begin{equation} \label{cu-preserve}
Dh(p)e_u = h^u(p)e_u + h^c(p)e_c.
\end{equation}

\begin{lema}\cite{BB}
Let $h$ be as above, then
\begin{displaymath} 
I(h) := \int_{B_1(0)} \log h^u(p) dm(p) <0.
\end{displaymath}
\end{lema}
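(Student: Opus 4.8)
The plan is to reduce the statement to a one-dimensional concavity estimate along the unstable segments foliating $B_1(0)$, and then integrate. First I would record that since $\|h-\mathrm{Id}\|_{C^1}<1$ the coefficient $h^u(p)$ stays close to $1$, hence $h^u(p)>0$ and $\log h^u$ is a bounded, integrable function. Writing $h$ in the adapted coordinates and letting $Z$ denote its $e_u$-component and $z$ the coordinate along $e_u$, relation \eqref{cu-preserve} identifies $h^u(p)$ with the partial derivative $\partial_z Z(p)$, i.e. with the $(u,u)$-entry of $Dh(p)$. This is the only place the hypothesis that $h$ preserves the direction of $e_s$ is used: it guarantees that $Dh(p)e_u$ has no $e_s$-component, so that $h^u$ is genuinely the directional derivative of a single coordinate function.

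Next I would foliate the ball by the straight chords $\ell_{x,y}$ in the unstable direction $e_u$. Each such chord has both endpoints on $\partial B_1(0)$, hence inside the region where $h=\mathrm{Id}$. Integrating $h^u=\partial_z Z$ along $\ell_{x,y}$ and applying the fundamental theorem of calculus gives
$$\int_{\ell_{x,y}} h^u \, dz = Z(x,y,z_+)-Z(x,y,z_-) = z_+ - z_- = |\ell_{x,y}|,$$
the boundary values being computed from $h=\mathrm{Id}$ near $\partial B_1(0)$. Thus the average of $h^u$ along each unstable chord equals $1$. The core estimate is then Jensen's inequality: since $\log$ is strictly concave,
$$\frac{1}{|\ell_{x,y}|}\int_{\ell_{x,y}} \log h^u \, dz \le \log\!\left(\frac{1}{|\ell_{x,y}|}\int_{\ell_{x,y}} h^u \, dz\right) = \log 1 = 0,$$
with equality if and only if $h^u\equiv 1$ along $\ell_{x,y}$. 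Disintegrating $m$ over the transverse parameters $(x,y)$ and applying Fubini yields $I(h)=\int_{B_1(0)}\log h^u\,dm \le 0$.

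For the strict inequality I would argue on the equality case. If $I(h)=0$, then equality holds in Jensen for almost every chord, forcing $\partial_z Z\equiv 1$; together with $Z=z$ near $\partial B_1(0)$ and the fact that the resulting $z$-independent correction $\psi(x,y)$ vanishes at the endpoints of every chord, this gives $Z(x,y,z)=z$ everywhere, so $h$ fixes the unstable coordinate. This contradicts the fact that $h$ genuinely rotates the center-unstable plane, that is, $Dh(p)e_u$ has a nonzero $e_c$-component on a set of positive measure, which is exactly what makes the perturbation nontrivial in the central direction. I expect the main obstacle to be precisely this last step: the concavity argument yields only $I(h)\le 0$ for every admissible $h$, and extracting the strict sign requires using that $h$ acts nontrivially on the unstable direction rather than merely that $h\ne\mathrm{Id}$; some care is also needed to match the Fubini disintegration of $m$ with the arclength measure on the chords, so that the per-leaf normalization used in Jensen's inequality is consistent.
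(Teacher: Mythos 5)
The paper itself does not prove this lemma---it is quoted from \cite{BB}---and your argument is exactly the one in that source: identify $h^u$ with $\partial_z Z$, use the fundamental theorem of calculus together with $h=\mathrm{Id}$ near $\partial B_1(0)$ to see that $h^u$ has average $1$ on every chord of $B_1(0)$ parallel to $e_u$, and conclude $I(h)\le 0$ by concavity of $\log$ and Fubini. That part is correct and complete, including the observation that $\|h-\mathrm{Id}\|_{C^1}<1$ keeps $h^u$ bounded away from $0$ so that $\log h^u$ is integrable, and the Fubini step is unproblematic since Lebesgue measure on the ball is literally $dx\,dy\,dz$ over the vertical chords.

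The genuine gap is in the strictness. From $I(h)=0$ you correctly deduce $Z(x,y,z)\equiv z$, but you then derive the contradiction from the claim that ``$Dh(p)e_u$ has a nonzero $e_c$-component on a set of positive measure,'' which is not a hypothesis of the lemma; the only nontriviality assumed is $h\neq\mathrm{Id}$. Your closing assertion that $h\neq\mathrm{Id}$ alone would not suffice is also incorrect: it does suffice once the remaining hypotheses are used. Write $h=(X,Y,Z)$; relation (\ref{cu-preserve}) gives $\partial_z X=0$, and preservation of the $x$-direction gives $\partial_x Y=\partial_x Z=0$. If $Z\equiv z$, the volume condition collapses to $\partial_x X(x,y)\cdot\partial_y Y(y,z)=1$; since the first factor does not depend on $z$ and the second does not depend on $x$, both are functions of $y$ alone, say $\partial_x X=c(y)$ and $\partial_y Y=1/c(y)$. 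Hence $X=c(y)x+d(y)$ on each $x$-chord, and evaluating near the two endpoints of the chord (where $h=\mathrm{Id}$) forces $c\equiv 1$, $d\equiv 0$, i.e.\ $X\equiv x$; the same argument along the $y$-chords then gives $Y\equiv y$. Thus $h=\mathrm{Id}$, contradicting the hypothesis. With this completion your proof establishes the lemma exactly as stated.
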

Consider $n_{r}$ the least positive integer such that
$$f^{n_{r}}(B_r) \cap B_r \ne \emptyset.$$
Denote by $\lambda^u_{r}(p)$ the unstable Lyapunov exponent of $f_r$ at $p$ and define:
$$\sigma^u_{f_{r}} = \int \log J^u_{f_{r}}(p) dm(p), \hspace{.3cm} \sigma^c_{f_{r}} = \int  \log J^c_{f_{r}}(p) dm(p)$$

where $J^{\tau}_{f_r}( p)$ denotes the Jacobian of $f_r$ on $E^{\tau}_{f_r}( p )$, that is, the modulus of the determinant of the restriction of $Df_r (p )$ to $E^{\tau}_{f_r}( p )$, $\tau=s, c,u$.

\begin{lema} \cite{BB}
Let $\sigma^u_{f_{r}}$ and $\sigma^c_{f_{r}}$ be as above, then
$$\log \lambda^u - \sigma^u_{f_r} \geq vol(B_r)(-I(h) - C \alpha^{n_r})$$
where $\alpha = \lambda^c/\lambda^u$, and 
$C = \max_{x\in B_r} \frac{h^c_r}{h^u_r} \cdot \max_{x \in B_r}{|| \text{Proj} _{u}(e_c)||}$
with $Proj_{u}(e_c)$ denoting the projection of $e_c$ over $E^u$ parallel to the new center bundle.

\end{lema}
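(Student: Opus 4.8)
The plan is to estimate $\sigma^u_{f_r}=\int\log J^u_{f_r}\,dm$ from above by comparing the unstable Jacobian of $f_r$ pointwise with $\log\lambda^u$ and isolating the contribution coming from $B_r$. The starting observation is that, by (\ref{cu-preserve}), the perturbation tilts $e_u$ only inside the center-unstable plane $E^{cu}=\mathrm{span}(e_c,e_u)$, so that (up to the strongly contracted $e_s$-component, which is irrelevant for the unstable expansion) the invariant unstable direction $E^u_{f_r}(q)$ is described by a single slope $s(q)$ relative to $e_u$; it is spanned by $e_u+s(q)e_c$. I would phrase the whole computation in terms of this slope field, whose evolution is governed by the competition between the contraction produced by the linear part $f$ and the tilt produced by $h_r$ on $B_r$.

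Off $B_r$ one has $f_r=f$, and since $Df$ is diagonal in $\{e_c,e_u\}$ the line $e_u+s e_c$ is sent to $\lambda^u(e_u+\alpha s\,e_c)$; hence the slope contracts, $s(f_r q)=\alpha\,s(q)$, and
\begin{equation*}
\log J^u_{f_r}(q)=\log\lambda^u+\tfrac12\log\frac{1+\alpha^2 s(q)^2}{1+s(q)^2}\le\log\lambda^u .
\end{equation*}
Thus the contribution of the complement of $B_r$ to $\sigma^u_{f_r}-\log\lambda^u$ is \emph{nonpositive}, which only helps the desired lower bound. On $B_r$, the chain rule $Df_r=Df\circ Dh_r$ and (\ref{cu-preserve}) give $Dh_r(e_u+s e_c)=h^u_r e_u+h^c_r e_c+s\,Dh_r(e_c)$, so that
\begin{equation*}
\log J^u_{f_r}(q)=\log\lambda^u+\log h^u_r(q)+\rho(q),
\end{equation*}
where the main term $\log h^u_r$ records the expansion of the $e_u$-component and $\rho(q)$ is the cross term produced by the incoming tilt $s(q)e_c$. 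Integrating the main term over $B_r$ and using the homothety normalization $w=\varphi_r(q)$, $dm(q)=r^3\,dm(w)$, $h^u_r(q)=h^u(\varphi_r(q))$, reproduces (once $vol(B_1(0))$ is normalized to $1$) the leading term $\int_{B_r}\log h^u_r\,dm=vol(B_r)\,I(h)$.

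It remains to bound the integral of $\rho$, which is supported in $B_r$, and this is exactly where $n_r$ enters. A single passage through $B_r$ tilts the unstable direction by an amount of size $\max_{B_r}(h^c_r/h^u_r)$; but by the definition of $n_r$ the orbit of any point spends at least $n_r$ consecutive iterates outside $B_r$ between two visits, and on that region $f_r=f$ contracts the slope by the factor $\alpha=\lambda^c/\lambda^u<1$ at each step. Consequently the slope $s(q)$ at the instant the orbit re-enters $B_r$ is at most of order $\alpha^{n_r}\max_{B_r}(h^c_r/h^u_r)$. Converting this $e_c$-tilt into a genuine change of the $e_u$-expansion costs the geometric factor $\max_{B_r}\|\mathrm{Proj}_u(e_c)\|$, so that $|\rho(q)|\lesssim C\alpha^{n_r}$ with $C=\max_{B_r}(h^c_r/h^u_r)\cdot\max_{B_r}\|\mathrm{Proj}_u(e_c)\|$, whence the integral of $\rho$ is bounded in absolute value by $vol(B_r)\,C\alpha^{n_r}$. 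Combining the nonpositive off-$B_r$ contribution, the main term, and this error bound yields
\begin{equation*}
\log\lambda^u-\sigma^u_{f_r}\ge -vol(B_r)\,I(h)-vol(B_r)\,C\alpha^{n_r},
\end{equation*}
as claimed.

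The step I expect to be the main obstacle is the uniform control of the slope field: one must show that $E^u_{f_r}$ is genuinely trapped in an invariant cone whose width, at each re-entry to $B_r$, is governed by $\alpha^{n_r}$ uniformly in $q$ and over all the (possibly many) returns of the orbit, and that the cross term $\rho$ is truly first order in the slope, so that a single power $\alpha^{n_r}$ together with the product form of $C$ suffices. Establishing this cone-contraction estimate uniformly, and checking that all the linearizations survive integration against $dm$ with the correct normalization of $vol(B_r)$ against $vol(B_1(0))$, is the technical heart of the argument.
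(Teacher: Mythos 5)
The paper does not actually prove this lemma: it is quoted verbatim from Baraviera--Bonatti \cite{BB}, so there is no in-paper proof to compare against. Measured against the argument in \cite{BB}, your sketch reconstructs the right mechanism: the main term $vol(B_r)I(h)$ comes from integrating $\log h^u_r$ over the support of the perturbation via the homothety change of variables, and the error term comes from the tilt of $E^u_{f_r}$ relative to $e_u$, which is created inside $B_r$ with size of order $\max (h^c_r/h^u_r)$, contracted by the factor $\alpha$ at each of the at least $n_r$ iterates spent outside $B_r$ before the next return, and converted into a change of unstable expansion through the geometric factor $\max\|\mathrm{Proj}_u(e_c)\|$ --- exactly the product defining $C$ and the power $\alpha^{n_r}$. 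Two points where your version is only schematic, and where \cite{BB} does something slightly different: first, they do not work with the norm Jacobian directly but with the cohomologous cocycle given by the expansion of the projection onto $E^u$ parallel to the new invariant complement; since that complement is $Df_r$-invariant and $f_r=f$ off $B_r$, the off-support contribution is then \emph{exactly} $\log\lambda^u$ (not merely $\le\log\lambda^u$ plus coboundary terms), and the constant $C$ comes out precisely rather than up to the unspecified constants hidden in your ``$\lesssim$''. Second, the uniform cone estimate over all returns is a geometric series in $\alpha^{n_r}$ (and one must also dispose of the $e_s$-component, which is contracted even faster); you correctly flag this as the technical heart but do not carry it out. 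Neither issue is a wrong turn --- they are the bookkeeping that \cite{BB} supplies --- so your proposal is a faithful outline of the cited proof rather than a complete one.
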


Observe that by (\ref{cu-preserve}) the perturbation $h_r$ preserves the center unstable bundle and the center-unstable jacobian of $h_r$ is equal to one. So the integral of the logarithm of the jacobian of $f_r$ in the center-unstable direction is the same of the one for $f$ (see \cite{BB}, pg.1664). Thus, we have the following corollary.

\begin{cor} \label{corocentral}
With the previous notations, the difference between $\sigma^c_{f_r}$ and $\log(\lambda^c)$ is bounded from below as follows:
$$\sigma^c_{f_{r}} - \log \lambda^c = \log \lambda^u - \sigma^u_{f_{r}}  \geq vol (B_{r}(p)) \cdot \left(-I(h) - C \cdot \left(\frac{\lambda^c}{\lambda^u}\right)^{n_r}\right).$$
\end{cor}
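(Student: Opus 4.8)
The plan is to treat the two assertions of the corollary separately: the displayed inequality is a direct transcription of the preceding Baraviera--Bonatti lemma, while the equality encodes the invariance of the integrated center-unstable Jacobian under the perturbation. First I would dispose of the inequality. Since $\alpha = \lambda^c/\lambda^u$ by definition, the right-hand side $vol(B_r(p))\big(-I(h)-C(\lambda^c/\lambda^u)^{n_r}\big)$ is literally $vol(B_r)\big(-I(h)-C\alpha^{n_r}\big)$, so the bound $\log\lambda^u-\sigma^u_{f_r}\geq vol(B_r)\big(-I(h)-C\alpha^{n_r}\big)$ furnished by the second lemma is exactly the inequality claimed, and nothing further is needed here.

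The content is therefore the equality $\sigma^c_{f_r}-\log\lambda^c=\log\lambda^u-\sigma^u_{f_r}$, which after rearranging reads
\begin{equation*}
\sigma^c_{f_r}+\sigma^u_{f_r}=\log\lambda^c+\log\lambda^u .
\end{equation*}
I would prove this by showing that the integrated logarithm of the center-unstable Jacobian is the same for $f_r$ and for $f$. The right framework is the quotient bundle $T\toro{3}/E^s$: since $h$ preserves the $E^s$-direction, both $Dh_r$ and the linear $Df$ leave $E^s$ invariant and hence act on this quotient, and the center-unstable Jacobian is precisely the modulus of the determinant of the induced action. By the discussion preceding the statement, equation (\ref{cu-preserve}) gives that $Dh_r$ preserves the center-unstable bundle with center-unstable Jacobian identically one, while the linearity of $f$ makes its induced determinant the constant $\lambda^c\lambda^u$. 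Writing $f_r=f\circ h_r$, multiplicativity of the determinant on the quotient yields
\begin{equation*}
J^{cu}_{f_r}(x)=J^{cu}_f(h_r(x))\cdot J^{cu}_{h_r}(x)=\lambda^c\lambda^u
\end{equation*}
for (Lebesgue) almost every $x$.

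It remains to pass from this pointwise identity to the integrated exponents, i.e. to conclude $\sigma^c_{f_r}+\sigma^u_{f_r}=\int_{\toro{3}}\log J^{cu}_{f_r}\,dm=\log(\lambda^c\lambda^u)$, after which comparison with $\log\lambda^c+\log\lambda^u=\log(\lambda^c\lambda^u)$ closes the argument and, together with the inequality above, proves the corollary. The one genuinely delicate point is this last identification: $\sigma^c_{f_r}+\sigma^u_{f_r}$ is a sum of two one-dimensional integrated Jacobians, whereas $J^{cu}_{f_r}$ is the two-dimensional (area) Jacobian, and these differ pointwise by the factor measuring the angle between $E^c_{f_r}$ and $E^u_{f_r}$. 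The resolution I would write out carefully is that, because the volume $m$ is $f_r$-invariant, the integral over $\toro{3}$ of the logarithm of this angle factor is unchanged under $f_r$ and therefore cancels in the time average; equivalently, the sum of the two Lyapunov exponents along the center-unstable bundle equals the exponent of the induced action on the top exterior power, whose integral is exactly $\int\log J^{cu}_{f_r}\,dm$. Thus the invariance of Lebesgue measure is what makes the bookkeeping of the one-dimensional versus two-dimensional Jacobians harmless, and this is the step that carries the real weight of the equality.
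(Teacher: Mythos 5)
Your proposal is correct and follows essentially the same route as the paper: the inequality is read off from the preceding Baraviera--Bonatti lemma, and the equality comes from the fact that $h_r$ preserves the center-unstable bundle with unit cu-Jacobian, so that $\sigma^c_{f_r}+\sigma^u_{f_r}=\int\log J^{cu}_{f_r}\,dm=\log\lambda^c+\log\lambda^u$. The only difference is that you explicitly justify the identification of the sum of the two one-dimensional integrated Jacobians with the integrated area Jacobian via the cancellation of the angle coboundary under the invariant volume, a step the paper delegates to the citation of \cite{BB}.
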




\section{Family of Anosov Linear Automorphisms}


In order to realize a perturbation that changes the sign of the central Lyapunov exponent, it is reasonable to take a diffeomorphim with central exponent close to $0$ and big unstable exponent  (so that we can borrow some hyperbolicity from the unstable direction). 
For each $k \in \inte{}$ define the linear automorphism $f_k: \toro{3} \ar \toro{3}$ induced by the integer matrix: 
$$A_k=\left( \begin{array}{ccc}
0 & 0 & 1 \\
0 & 1 & -1 \\
-1 & -1 & k \end{array} \right). $$

The characteristic polynomial of $A_k$ is
$$p_{k}(x)= x^3 -(k+1)x^2 + kx -1.$$

\begin{lema}
For all $k \geq 5$, $A_k$ has real eigenvalues $0<\lambda^s_k < \lambda^c_k < 1<\lambda^u_k$ and
$$\lambda^s_k \ar 0, \lambda^c_k \ar 1, \lambda^u_k \ar \infty$$
as $k \ar \infty $.
\end{lema}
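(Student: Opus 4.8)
The plan is to locate all three eigenvalues by evaluating $p_k$ at a few well-chosen points and then to read off the asymptotics from Vieta's formulas. Since $p_k(x)=x^3-(k+1)x^2+kx-1$ has real coefficients and $p_k(0)=-1$, the constant term tells us the product of the roots equals $1$; likewise the sum of the roots is $k+1$ and the sum of the pairwise products is $k$. These three relations will do most of the asymptotic work once the ordering of the eigenvalues is pinned down.

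First I would establish that $A_k$ has three real eigenvalues in the claimed position via the intermediate value theorem. Evaluating gives $p_k(0)=-1<0$, $p_k(1)=-1<0$, and, crucially, $p_k(1/2)=\tfrac{k}{4}-\tfrac{9}{8}$, which is \emph{positive} exactly when $k\geq 5$. Together with $p_k(x)\ar+\infty$ as $x\ar+\infty$, the sign pattern $-,+,-,+$ over the points $0,\ 1/2,\ 1,\ +\infty$ forces one root in each of the intervals $(0,1/2)$, $(1/2,1)$, $(1,\infty)$. Since a cubic has at most three roots, these are all of them, so we obtain three distinct real eigenvalues with $0<\lambda^s_k<1/2<\lambda^c_k<1<\lambda^u_k$; in particular $\lambda^s_k$ and $\lambda^c_k$ stay confined to fixed intervals independent of $k$.

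For the asymptotics I would argue directly from the symmetric functions. Since $\lambda^s_k+\lambda^c_k$ is bounded (it lies in $(0,2)$) while $\lambda^s_k+\lambda^c_k+\lambda^u_k=k+1$, we get $\lambda^u_k=k+1-\lambda^s_k-\lambda^c_k\ar\infty$. Then, using the product relation $\lambda^s_k\lambda^c_k\lambda^u_k=1$ together with the lower bound $\lambda^c_k>1/2$, we obtain $\lambda^s_k=1/(\lambda^c_k\lambda^u_k)<2/\lambda^u_k\ar 0$.

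The only slightly delicate point, and the one I expect to be the main obstacle, is showing $\lambda^c_k\ar 1$ rather than merely staying in $(1/2,1)$. The idea is to rewrite the defining equation $p_k(\lambda^c_k)=0$ by separating the terms carrying the factor $k$. Writing $c=\lambda^c_k$, the relation $c^3-(k+1)c^2+kc-1=0$ becomes $kc(1-c)=-c^3+c^2+1$, so $c(1-c)=(-c^3+c^2+1)/k$. Since $c\in(1/2,1)$ keeps the right-hand side bounded, the quotient tends to $0$, hence $c(1-c)\ar 0$; as $c\geq 1/2$ is bounded away from $0$, this forces $1-c\ar 0$, i.e. $\lambda^c_k\ar 1$. (The same rearrangement even yields the rate $1-\lambda^c_k=O(1/k)$, though this is not needed here.)
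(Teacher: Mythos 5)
Your proposal is correct and follows essentially the same route as the paper: both hinge on the sign of $p_k(1/2)=\tfrac{k}{4}-\tfrac{9}{8}$ (the source of the hypothesis $k\geq 5$), the intermediate value theorem, and the Vieta relations, and your identity $kc(1-c)=-c^3+c^2+1$ is just the paper's evaluation $p_k(1-\varepsilon)=k(1-\varepsilon)\varepsilon-\varepsilon(1-\varepsilon)^2-1>0$ read off at the root instead of at a test point. The only cosmetic difference is that the paper brackets $\lambda^u_k\in(k,k+1)$ directly from $p_k(k)=-1$ and $p_k(k+1)=k(k+1)-1$, whereas you deduce $\lambda^u_k\to\infty$ from the trace $k+1$ together with the boundedness of the other two roots; both are valid.
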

\begin{proof}
First of all note that :
\begin{itemize}
\item $p_k(1/2) = \frac{k}{4}-\frac{9}{8} > 0$, $\forall k \geq 5$;
\item $p_k(1) = p_k(k) = -1$, $\forall k$;
\item $p_k(k+1) = k(k+1)-1 \geq 1$, $\forall k \geq 1.$
\end{itemize}

So, for all $k \geq 5$, $p_k$ has a root $\lambda^u_k \in (k,k+1)$ and a root $\lambda^c_k \in (1/2,1)$. Denoting by $\lambda^s_k$ the other root we have: 
$$0<\lambda^s_k = \frac{1}{\lambda^c_k \cdot \lambda^u_k} < \lambda^c_k < 1< k <\lambda^u_k.$$

Now, given any $0<\varepsilon<1$ we have
$$p_k(1-\varepsilon) = k(1-\varepsilon)\varepsilon -\varepsilon(1-\varepsilon)^2 -1$$

which is trivially positive for large values of $k$. 
That is, $\lambda^c_k \ar 1$ when $k \ar \infty$. Also, since $k <\lambda^u_k$ and $\lambda^s_k \cdot \lambda^c_k \cdot \lambda^u_k =1$ we conclude that
$$\lambda^c_k \ar 1, \lambda^u_k \ar \infty, \text{ and } \lambda^s_k \ar 0$$
as $k \ar \infty.$
\end{proof}

Next, we evaluate the stable, central and unstable directions of $f_k$.
$$
A_k \cdot \left(\begin{array}{c}
a \\
b \\
c \end{array}\right) =\left(\begin{array}{c}
\lambda a \\
\lambda b \\
\lambda c \end{array}\right)  \Rightarrow 
\left(\begin{array}{c}
a \\
b \\
c \end{array}\right) = \left(\begin{array}{c}
a \\
\frac{\lambda a}{1-\lambda} \\
\lambda a \end{array}\right). $$
So the directions are $v^{\tau}_k:=(1, \lambda_k^{\tau}/(1-\lambda_k^{\tau}), \lambda_k^{\tau})$ where $\tau=s,c,u$. Let ${\displaystyle e^k_{\tau}:=\frac{v_k^{\tau}}{||v_k^{\tau}||}}$, $\tau=s,c,u$. Then we have:
$$e^k_s \ar \left(\begin{array}{c}
1 \\
0 \\
0 \end{array}\right),
e^k_c \ar \left(\begin{array}{c}
0 \\
1 \\
0 \end{array}\right),
e^k_u \ar \left(\begin{array}{c}
0 \\
0 \\
1 \end{array}\right).$$

The following step is to apply local perturbations to each $f_k$. The aim of the next section is to define a family of functions $h_k$ that we will use to do the perturbation.



\section{ Proof of  Theorems \ref{teo1} and \ref{teo2}}

For a linear partially hyperbolic automorphim with eigenvalues $\lambda^s < \lambda^c < \lambda^u$,  corollary \ref{corocentral} implies that the following quantities are relevant to the amount of change of the central Lyapunov exponent after local perturbation method of Baraviera-Bonatti:
\begin{itemize}
\item $\alpha = \lambda^c / \lambda^u$;
\item ${\displaystyle C = \max_{x \in B_r} \frac{h^c_r}{h^u_r} \cdot \max_{x \in B_r}{|| \text{Proj} _{u}(e_c)||}}$;
\item $vol(B_r)$ and the return time $n_r$;
\item $I(h).$
\end{itemize}

We consider the family $f_k$ constructed in the previous section. Recall that for each $k$ there exists an adapted inner product (which gives adapted metric) where $e^s_k, e^c_k, e^u_k$ form an orthonormal set. As these eigenspaces are converging to the canonical basis the adapted metrics are close to the euclidean metric when $k$ is large enough. By euclidean metric we mean the usual inner product coming from euclidean inner product of $\mathbb{R}^3.$ We will take a non-fixed point $p$ ($f_k(p) \neq p.$) and $0 < r < 1$ we  take a local coordinate $\pi_k: B_r^k(p) \ar B_r(0) \subset \mathbb{R}^3$  such that the adapted inner product is the pullback by $D \pi_k$ of the euclidean inner product. Here $B_r^k$ is the ball of radius $r$ with respect to the adapted metric.
 
 Take an arbitrary point
$$p \in F:= P( (0,2/3) \times (0,1) \times (5/6,1))$$ where $P: \mathbb{R}^3 \ar \toro{3}$ is the usual canonical projection. 
 Take fixed small $r$ such that $B^k_{r} (p) \in F$ for large $k.$   It is possible to find such $r > 0$, because for large $k$ all the adapted metrics are close to the euclidean metric.
It is easy to see  that $f_k^{-1}(F) \cap F=\emptyset$ so $ f_k(F) \cap F =\emptyset$ and  $ f_k(B^k_{r_0}(p )) \cap B^k_{r_0}( p) = \emptyset$, $\forall k \geq k_0$. 
 
We take  $h: B_1(0) \ar B_1(0)$  such that $ 0 \neq \|h - Id\|_{C^1} < \eta$ ($\eta$ will be defined later).  Let $\pi_k : B^k_r(p) \ar B_r(0) \subset \mathbb{R}^3$ be a local coordinate which is isometry and the derivative of $\pi_k$ sends $e^s_k, e^c_k, e^u_k$ to the canonical basis of $\mathbb{R}^3$.  Let  $\xi : \reais{3} \ar \reais{3}$, $\xi_r(x):= \frac{1}{r}x$ be the homothety of ratio $\frac{1}{r}$, then we define $\varphi_{k,r}(x) := \xi_r \circ \pi_k (x)$ and like in section (\ref{LP}) we construct $h_{k, r} : \toro{3} \ar \toro{3}$ as follows: 
\begin{equation} \label{normalize2} h_{k,r}(w) = \twopartdef {w}{w \notin  B^k_{r}(p);}{ \varphi_{k,r} ^{-1} \circ h \circ \varphi_{k,r} (w) }{ w \in   B^k_{r}(p).  }   
\end{equation}

%

Now the idea is to consider the arcs of diffeomorphisms $f_{k,r} := f_k \circ h_{k,r}$ and show that for some positive $r$ and all large $k$, $h_{k, r}$ is partially hyperbolic with positive Lyapunov exponent almost everywhere.

We need to guarantee that these arcs are composed of partially hyperbolic diffeomorphisms.
 
Even knowing that the set of partially hyperbolic diffeomorphisms is open in $C^1$- topology, we do not know the ``size" of this set.  To construct our desired examples in Theorems \ref{teo1}, \ref{teo2}  we need the sequence $\{h_k\}$ to be ``far from" $Id$, so it is not obvious that the composition $f_{k, r} = f_k \circ h_{k,r}$ is partially hyperbolic.  However, in our case this is not a serious issue. As $k$ grows, the domination between invariant sub bundles of $f_k$ is getting better and  and the expansion and contraction of respectively expanding and contracting bundles increase. We observe that when the domination between bundles is bigger, one can take wider invariant cones in the definition of partial hyperbolicity by cones (see Appendix).  So it is reasonable to expect that  we can do bigger perturbations of $f_k$ and still remain in  the partially hyperbolic diffeomorphisms set.  
%


\begin{lema} \label{parcialmente}
Let $\{f_k\}$ be the sequence of linear partially hyperbolic automorphisms defined as before and $0 < r < 1.$ There exist  $\eta >0$ and $K_0$ such that if $h: B_1(0) \ar B_1(0)$ is a diffeomorphism satisfying $||h-Id||_{C^1} < \eta$ and equal to identity on a neighborhood of the boundary of $B_1(0)$ then,  for $k \geq k_0$, $f_k \circ h_{k, r}$ is absolutely partially hyperbolic. 
\end{lema}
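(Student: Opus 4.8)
The plan is to establish partial hyperbolicity via invariant cone families, exploiting the fact that the linear automorphisms $f_k$ have unboundedly strong domination as $k \to \infty$. The key quantitative observation is that the perturbation $h_{k,r}$ is built from a fixed diffeomorphism $h$ with $\|h-Id\|_{C^1} < \eta$, but conjugated by the homothety $\varphi_{k,r} = \xi_r \circ \pi_k$ of ratio $1/r$. Therefore $Dh_{k,r} = D\varphi_{k,r}^{-1} \circ Dh \circ D\varphi_{k,r}$; since $D\varphi_{k,r}$ acts as scaling by $1/r$ isotropically (in the adapted coordinates $\pi_k$ is an isometry), the linear map $Dh_{k,r}$ has the \emph{same} singular values and operator-norm estimates as $Dh$ itself — the homothety conjugation does not amplify the $C^1$-size. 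Hence $\|Dh_{k,r}(w) - Id\|$ is bounded by a function of $\eta$ alone, \emph{uniformly in $k$ and $r$}. This is the crucial point that makes the perturbation controllable.

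\medskip

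First I would fix, for each large $k$, the standard cone family for the linear map $f_k$ in the adapted metric: since $e^s_k, e^c_k, e^u_k$ are orthonormal and the eigenvalues satisfy $\lambda^s_k < \lambda^c_k < 1 < \lambda^u_k$ with $\lambda^s_k \to 0$, $\lambda^c_k \to \infty^{-1}$ (i.e. the gaps $\lambda^u_k/\lambda^c_k \to \infty$ and $\lambda^c_k/\lambda^s_k \to \infty$), the unstable cone $\mathcal{C}^u$ around $e^u_k$, the center-unstable cone around $\mathrm{span}(e^c_k,e^u_k)$, and the stable cone $\mathcal{C}^s$ around $e^s_k$ can all be taken of a \emph{fixed} angular width $\theta_0$ independent of $k$, and the contraction/expansion rates on these cones improve with $k$. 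Because the adapted metrics converge to the Euclidean one, I may work with a single Euclidean cone family for all large $k$. Next I would estimate $D(f_k \circ h_{k,r}) = Df_k \circ Dh_{k,r}$: since $Dh_{k,r} = Id$ outside $B^k_r(p)$, the composition equals $Df_k$ there and preserves the cones automatically; inside $B^k_r(p)$, I write $Dh_{k,r} = Id + E$ with $\|E\| \le c(\eta)$ uniformly, so $Df_k \circ Dh_{k,r} = Df_k(Id + E) = Df_k + Df_k E$.

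\medskip

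The heart of the argument is then a cone-invariance check: I must show that the $f_k$-cones are preserved by $f_k \circ h_{k,r}$ and that the rates are still dominated in the sense of the Appendix's cone criterion. The perturbation term $Df_k E$ can tilt a vector out of the unstable cone, but the tilt it produces is controlled by $\|E\| \le c(\eta)$ times the expansion of $Df_k$, whereas the margin of the cone is protected by the domination gap of $f_k$, which grows without bound. Concretely, choosing $\eta$ small enough that $c(\eta)$ is beaten by the (eventually huge) spectral gaps, the map $Df_k \circ Dh_{k,r}$ still sends the unstable cone strictly inside itself with expansion bounded below by, say, $\lambda^u_k/2$, sends the complementary stable cone inside itself with contraction, and preserves the center-unstable cone, yielding the three-way absolute domination. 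I would make these into three explicit inequalities and take $K_0$ large and $\eta$ small so that all hold simultaneously for $k \ge K_0$.

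\medskip

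\textbf{The main obstacle} I anticipate is the bookkeeping for \emph{absolute} (rather than merely pointwise) partial hyperbolicity with the requisite uniform constants $a, b$ separating the three rates: one must verify that the perturbation does not merely preserve an invariant splitting but keeps the uniform bands $\|D(v^s)\| < a < \|D(v^c)\| < b < \|D(v^u)\|$ with a single $a,b$ valid on all of $\toro{3}$. The delicate region is the boundary of $B^k_r(p)$, where $h_{k,r}$ interpolates between the perturbed interior and the identity exterior; there the derivative $Dh_{k,r}$ still differs from $Id$ by at most $c(\eta)$, so the same cone estimate applies, but one must confirm the three rate-bands chosen inside and outside are compatible. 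Since the exponents of $f_k$ spread apart as $k \to \infty$ while the perturbation size stays fixed at order $\eta$, there is ample room to choose $a,b$ lying strictly between the perturbed stable/central and central/unstable rates; I would verify this quantitatively using the explicit eigenvalue asymptotics from the previous section.
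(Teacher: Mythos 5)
Your proposal is correct and follows essentially the same route as the paper: the two key points — that conjugating $h$ by the homothety $\varphi_{k,r}$ (an isometry composed with an isotropic scaling) does not amplify the $C^1$-distance to the identity, and that the cone criterion for absolute partial hyperbolicity tolerates a perturbation size depending only (and increasingly) on the domination ratio $\Theta_k$, which grows with $k$ — are exactly the two ingredients the paper uses. The only difference is organizational: the paper delegates the cone-invariance and rate estimates to Lemma \ref{parcialmente1} and Remark \ref{remarkimp} in the Appendix, whereas you sketch that computation inline.
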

\begin{proof}

In the Appendix, for any linear partially hyperbolic diffeomorphism $f$ we estimate the size of the $C^1$-neighborhood of  $f$ inside absolutely partially hyperbolic diffeomorphisms. Here we are dealing with a sequence $f_k$ and the claim is that a same estimate for the size of neighborhood works for all large enough $k.$  

Now by remark \ref{remarkimp} 
 the size of permitted perturbation (i.e the number $\varepsilon$) in the lemma \ref{parcialmente1} depends increasingly on the ratio $\Theta_k:= \min  \left\{ \frac{|\lambda^u_k|}{|\lambda^c_k|}, \frac{|\lambda^c_k|}{|\lambda^s_k|} \right\}$. When $k$ grows this ratio also grows. So we take the same $\varepsilon$ for all $f_k.$ 
We should emphasize that the size of permitted perturbation is measured in the distance corresponding to the adapted metric of $f_k.$

So let $\varepsilon$ be as above and take any $\eta \leq \varepsilon.$ We have $\|\xi_r^{-1}\circ h\circ\xi_r - Id\|_{C^1} \leq r \|h - Id\|_{C^1} \leq \|h - Id\|_{C^1} \leq \varepsilon.$ By definition of adapted metric for each $f_k$ we have $D \pi_k$ preserves norms and angles and consequently the distance (adapted norm corresponding to $f_k$) between $ (\pi_k \circ \xi_r)^{-1} \circ h \circ (\pi_k \circ \xi_r)$  and identity is also less than $\varepsilon$ and we can apply lemma \ref{parcialmente1} taking $g:=h_{k, r}.$
%
%
\end{proof}



By the above lemma  it follows that $f_{k,r}$ is partially hyperbolic for  large enough $k$. Also, since the same family of invariants cones works for  both $f_{k,r}$ and $f_k$, the angle between the new center bundle and $E^u_{f_{k,r}}$ is uniformly bounded. That is, the norm of the projection of $E^c_{f_{k,r}}$ over $E^u_{f_{k,r}}$ parallel to the new center bundle is uniformly bounded.

 Now let $n_r(k)$ be the least positive integer for which $(f_k)^{n_r(k)}(B^k_r(p)) \cap B^k_r(p) \ne \emptyset$. Then we have
$$\sigma^c_{f_{k,r}}- \log \lambda^c _k\geq vol(B^k_r(p)) \cdot (-I(h) - C_k \alpha_k^{n_r(k)})$$

where 
$${\displaystyle  \alpha_k = \frac{\lambda^c}{\lambda^u}, \text{ and }C_k= \max_{x\in B_r^k} \frac{h^c_{r, k}}{h^u_{r, k}} \cdot \max_{x \in B_r^k}{|| \text{Proj} _{u}(e_c)||} }.$$ 

As $\max_{x \in B_r^k}{|| \text{Proj} _{u}(e_c)||} $ is uniformly bounded, it follows that $C_k$ is uniformly bounded, say $C_k<D, \forall k$. Thus, since $n_r(k) \geq 2 $ for all $k$, we get:
$$\sigma^c_{f_{k,r}} - \log \lambda^c_k \geq vol(B^k_r(p)) \cdot (-I(h) - D \alpha_k^2).$$

Observe that  $\alpha_k \ar 0$ when $k \ar \infty$. So, for large values of $k$ we have
 $$-I(h_k) - D \alpha_k^2 \geq \frac{-I(h)}{2}$$ 

 which implies
$$\sigma^c_{f_{k, r}} - \log \lambda^c_k \geq -vol(B^k_r(p)) \cdot \frac{I(h)}{2} \ar -vol(B(r,p))\frac{I(h)}{2} > 0.$$

Observe that the volume appearing in the above equations is the fixed euclidean volume on the torus and as $k$ is large enough the volume of $B_r^k(p)$ is close to $B_r(0).$   Thus since $\log \lambda^c_k \ar 0$, for large values of $k$ we get 
$$\sigma^c_{f_{k, r}}- \log \lambda^c_k > -\log \lambda^c_k \Rightarrow \sigma^c_{f_{k,r}} > 0.$$

Here we conclude the proof of Theorem \ref{teo1}.  We have obtained $f_{k, r}$ isotopic to Anosov such that the average of central Lyapunov exponent is positive.  Indeed, $H_k:[0,1] \times \toro{3} \ar \toro{3}$, $H_k(s,x) =f_k \circ h_{k, sr} $ is an isotopy between $f_k$ and $f_{k,r}$. By continuity argument we conclude that there exists an open subset of volume preserving diffeomorphisms $U$ containing $f_{k, r}$ such that for any $g \in U$ we have $\sigma^c_g > 0.$ By the following result of Hammerlindl-Ures we conclude that all $g \in U$ are ergodic and so the central Lyapunov exponent of almost every point is positive.

\begin{teo}{\cite{HU}}\label{HamUres}
Let $f: \toro{3} \ar \toro{3}$ be a $C^{1+\alpha}$ volume preserving partially hyperbolic diffeomorphism, homotopic to a hyperbolic automorphism $A$. Assume $f$ is not ergodic. Then,
\begin{itemize}
\item $E^s \oplus E^u$ integrates to a minimal foliation;
\item $f$ is topologically conjugate to $A$ and the conjugacy carries strong leaves of $f$ to the correspondent strong leaves of $A$;
\item the central Lyapunov exponent of $f$ is $0$ almost everywhere.
\end{itemize}
\end{teo}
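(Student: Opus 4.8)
The plan is to derive all three conclusions from a single structural dichotomy governed by \emph{accessibility}. Recall that $f$ is accessible if any two points of $\toro{3}$ can be joined by a concatenation of finitely many arcs, each contained in a single strong stable leaf $W^s$ or a single strong unstable leaf $W^u$. The first step is a reduction: since $\dim E^c = 1$, the nonconformality of $Df|_{E^c}$ is trivial, so the center bunching hypothesis of Burns--Wilkinson holds automatically, and every accessible $C^{1+\alpha}$ volume preserving absolutely partially hyperbolic diffeomorphism of $\toro{3}$ is ergodic. Contrapositively, the standing assumption that $f$ is \emph{not} ergodic forces $f$ to be \emph{not} accessible, and everything will be extracted from this.

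The second step converts non-accessibility into the stated foliation. By the theory of accessibility classes for one-dimensional center (Rodriguez Hertz--Rodriguez Hertz--Ures), the set $\Gamma$ of points whose accessibility class is not open is closed, $f$-invariant, and laminated by the accessibility classes, each an injectively immersed submanifold tangent to $E^s \oplus E^u$. I would show $\Gamma = \toro{3}$: the complement $\toro{3}\setminus\Gamma$ is the union of the open accessibility classes, hence open, it is closed because $\Gamma$ is closed, and it is $f$-invariant. Since $f$ is homotopic to the hyperbolic automorphism $A$ it is transitive (its strong foliations are minimal, a consequence of the global product structure of Brin--Burago--Ivanov), so it has no proper nonempty clopen invariant set; thus $\toro{3}\setminus\Gamma$ is empty or everything, and the second alternative would make a single open-and-closed class exhaust the connected manifold, i.e. $f$ accessible, a contradiction. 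Hence $\Gamma=\toro{3}$ and the classes assemble into a genuine foliation $\mathcal F^{su}$ tangent to $E^s\oplus E^u$. For minimality I would lift to $\reais{3}$ and use Hammerlindl's leaf conjugacy to identify $\mathcal F^{su}$ with the linear $su$-foliation of $A$, whose leaves are translates of the totally irrational plane $E^s_A\oplus E^u_A$ and hence dense in $\toro{3}$; transporting density through the conjugacy gives minimality.

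For the topological conjugacy I would combine Hammerlindl's leaf conjugacy $h_0$, which already matches the center foliation of $f$ with that of $A$, with the now-integrable transverse foliation $\mathcal F^{su}$. Because $\mathcal F^{su}$ and $\mathcal F^c$ are transverse and are carried to the corresponding transverse linear foliations of $A$, one upgrades $h_0$ to a homeomorphism $h$ with $h\circ f = A\circ h$; the one-dimensional strong directions inside each $\mathcal F^{su}$-leaf are pinned to those of $A$ by uniqueness of the contracting and expanding directions, so $h$ sends $W^s$- and $W^u$-leaves to the respective strong leaves of $A$.

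The last and hardest conclusion is $\lambda^c = 0$ Lebesgue-a.e., and here I expect the real obstacle. The central exponent $\lambda^c(x)=\lim_n \tfrac1n\log\|Df^n|_{E^c}(x)\|$ is $f$-invariant and, by uniform continuity of $E^c$ and $Df$, constant along $W^s$- and $W^u$-leaves, hence along the leaves of $\mathcal F^{su}$. Suppose $\lambda^c>0$ on a set of positive volume; passing to an ergodic component, $\lambda^c\equiv c>0$ there. Then $E^c$ and $E^u$ are both expanded, so by Pesin theory the foliation $\mathcal F^{cu}$ tangent to $E^c\oplus E^u$ is the unstable lamination and is contracted by $f^{-1}$. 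Running the Hopf argument with the one-dimensional $W^s$ and the two-dimensional Pesin-unstable leaves $\mathcal F^{cu}$ shows every Birkhoff average of a continuous function is constant along $W^s$ and along $\mathcal F^{cu}$; since $W^s$ is minimal and transverse to $\mathcal F^{cu}$ with local product structure, these laminations connect any pair of points, the averages are globally constant, and $f$ is ergodic — a contradiction. The same argument for $f^{-1}$ excludes $\lambda^c<0$ on a positive measure set, leaving $\lambda^c=0$ a.e. The delicate point, and the main obstacle, is the absolute-continuity and connectivity input for this mixed Hopf argument: the $\mathcal F^{cu}$-leaves are only nonuniformly hyperbolic Pesin manifolds, so one must establish their absolute continuity on a positive-measure set and verify that together with the minimal $W^s$ they genuinely saturate $\toro{3}$ rather than a mere full-measure subset.
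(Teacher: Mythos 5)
This statement is not proved in the paper at all: it is Hammerlindl--Ures' theorem, quoted verbatim from \cite{HU} and used as a black box, so there is no internal proof to compare your attempt against. Judged on its own, your sketch does capture the broad strategy of the actual Hammerlindl--Ures argument (non-ergodic $\Rightarrow$ non-accessible via the Rodriguez Hertz--Rodriguez Hertz--Ures criterion for one-dimensional center, then extract an $su$-lamination from the non-open accessibility classes), but several of your steps contain genuine gaps rather than routine omissions.

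Concretely: (i) To rule out open accessibility classes you invoke transitivity of $f$, attributing it to ``minimality of the strong foliations, a consequence of the global product structure of Brin--Burago--Ivanov.'' Global product structure does not give minimality of $W^s$ or $W^u$ on $\toro{3}$; indeed minimality of the $su$-foliation is one of the \emph{conclusions} of the theorem, and transitivity of partially hyperbolic diffeomorphisms of $\toro{3}$ isotopic to Anosov is itself a nontrivial theorem (Potrie), not a one-line consequence. The standard route instead rules out a proper lamination by showing its boundary leaves would be incompressible $2$-tori tangent to $E^s\oplus E^u$, impossible given the hyperbolic action on $H_1(\toro{3})$. (ii) Your minimality argument transports density of the linear $su$-plane ``through Hammerlindl's leaf conjugacy,'' but the leaf conjugacy matches \emph{center} leaves, not $su$-leaves; it gives no control on $\mathcal F^{su}$ without further work, and likewise the upgrade from leaf conjugacy to a genuine topological conjugacy carrying strong leaves to strong leaves is a substantial step you only gesture at. (iii) Most importantly, the final and hardest conclusion, $\lambda^c=0$ almost everywhere, is left open: you yourself flag that the mixed Hopf argument requires absolute continuity of the nonuniform Pesin $cu$-lamination and a saturation argument, and no construction is supplied. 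Also note that constancy of the forward central exponent along $W^u$-leaves is not immediate from continuity of $E^c$; it requires the usual forward/backward exponent comparison. As it stands the proposal is a plausible outline of the known proof with the decisive steps missing, not a proof.
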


 Now we prove Theorem \ref{teo2}. Again from the continuity of $\sigma^c$, there is some $0<r_0 < r$ for which $\sigma^c_{f_{k, r_0}} = 0$.


That is, we got a partially hyperbolic diffeomorphism $g := f_{k,r_0}: \toro{3} \ar \toro{3}$, isotopic to an Anosov diffeomorphism and with $\sigma^c_g = 0$. However, using again the above Theorem we obtain the following:


\begin{cor}
The diffeomorphism $g$ obtained above has zero central Lyapunov exponent almost everywhere.
\end{cor}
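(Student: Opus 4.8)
The plan is to split according to whether or not $g$ is ergodic with respect to the Lebesgue measure $m$, and to check that both alternatives force the central exponent to vanish almost everywhere. Since $g$ is volume preserving and partially hyperbolic, it is either ergodic or it is not, and the point is that we need not decide which.

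First I would treat the ergodic case. Since the central bundle of $g$ is one-dimensional, for Lebesgue-almost every $x$ the central Lyapunov exponent is a Birkhoff average
$$\lambda^c(x)=\lim_{n\to\infty}\frac1n\log\|Dg^n(x)|_{E^c}\|=\lim_{n\to\infty}\frac1n\sum_{i=0}^{n-1}\log J^c_g(g^i(x)),$$
the second equality coming from the multiplicative cocycle property of the one-dimensional central Jacobian. If $g$ is ergodic, the Birkhoff ergodic theorem identifies this limit with the space average $\int\log J^c_g\,dm=\sigma^c_g$ at almost every point. Since $g=f_{k,r_0}$ was produced precisely so that $\sigma^c_g=0$, this yields $\lambda^c(x)=0$ for $m$-almost every $x$.

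Next I would treat the non-ergodic case. Here $g$ is a $C^{1+\alpha}$ volume preserving partially hyperbolic diffeomorphism homotopic to the hyperbolic (indeed Anosov) automorphism $A_k$, so Theorem \ref{HamUres} applies directly; its third conclusion states that the central Lyapunov exponent of $g$ is zero almost everywhere, which is exactly the assertion.

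Combining the two cases, $\lambda^c=0$ Lebesgue-almost everywhere regardless of whether $g$ is ergodic, completing the proof. I do not expect a serious obstacle: the non-ergodic branch is an immediate appeal to Hammerlindl--Ures, and the only point requiring care is the elementary identification, in the ergodic branch, of the pointwise central exponent with the integral $\sigma^c_g$ via the Birkhoff theorem and the one-dimensionality of $E^c$. The conceptual content is simply the observation that the dichotomy ``ergodic versus not ergodic'' collapses, since both horns deliver the same vanishing without our having to establish ergodicity of $g$, which is left as an open problem in the introduction.
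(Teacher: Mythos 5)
Your proposal is correct and follows exactly the paper's own argument: the same dichotomy on ergodicity, with the ergodic horn handled by identifying the a.e.\ constant central exponent with $\sigma^c_g=0$ and the non-ergodic horn handled by the third item of the Hammerlindl--Ures theorem. The extra detail you supply on the Birkhoff identification via the one-dimensionality of $E^c$ is a harmless elaboration of what the paper states in one line.
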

\begin{proof}
Indeed, if $g$ is ergodic then the Lyapunov exponents are constant almost everywhere. So we have $\lambda^c_g =\sigma^c_g = 0$ almost everywhere. On the other case, that is, if $g$ is not ergodic then by the previous theorem (third item) we have that $\lambda^c_g = 0$ almost everywhere.
\end{proof}

To finish the proof we note that, since $g$ is isotopic to a linear Anosov automorphism, by \cite{H} all central leaves are homeomorphic to $\mathbb{R}.$  $\blacksquare$

%

\section{Appendix: Cone constructions and absolutely partially hyperbolic diffeomorphisms} \label{appendix}

\begin{defi}
Given an orthogonal splitting of the tangent bundle of $M$
$$E \oplus F = T M$$

and a real constant $\beta >0$, for each $x\in M$ we define the cone centered in $E(x)$ with angle $\beta$ as
$$C(E,x,\beta) = \{ v \in T_xM : ||v_F || \leq \beta ||v_E|| , \text{ where } v = v_E + v_F  , v_E \in E(x), v_F\in F(x).\}$$
\end{defi}

Given a partially hyperbolic diffeomorphism $f: \toro{3} \ar \toro{3}$ with invariant splitting
$$T M = E^s \oplus E^c \oplus E^u$$

there is an adapted inner product (and then an adapted norm) with respect to which the splitting is orthogonal (see \cite{YP}). Thus, given $\beta>0$ we can define standard families  of cones centered on the fiber bundles $E^{\tau}(x)$ with angle $\beta$, $C^{\tau}(x,\beta)$, $\tau = s,c,u,cs,cu$.

Consider $f:M \ar M$ a (absolutely) partially hyperbolic diffeomorphism. By using an adapted norm $|| \cdot ||$, we can consider the invariant splitting 
$$T M = E^s \oplus E^c \oplus E^u$$
as being an orthogonal splitting, and there exist numbers
$$0 < \lambda_1 \leq \mu_1 < \lambda_2 \leq \mu_2 <\lambda_3 \leq \mu_3, \hspace{.3cm} \mu_1<1, \hspace{.3cm} \lambda_3>1$$
for which
$$\lambda_1 \leq ||Df(x)|E^s(x)|| \leq \mu_1$$
$$\lambda_2 \leq ||Df(x)|E^c(x)|| \leq \mu_2$$
$$\lambda_3 \leq ||Df(x)|E^u(x)|| \leq \mu_3.$$

Partial hyperbolicity can be described in terms of invariant cone families (see \cite{YP}, pg.15). More specifically, let $f: \toro{3} \ar \toro{3}$ be a partially hyperbolic diffeomorphism and 
$$T _x\toro{3} = E^s(x) \oplus E^c(x) \oplus E^u(x)$$

a continuous orthogonal splitting of $T \toro{3}$. Given a real number $\beta>0$ define the families of cones
$$C^s(x,\beta) = C(x,E^s(x),\beta), C^u(x,\beta) = C(x,E^u(x),\beta)$$
$$C^{cs}(x,\beta) = C(x,E^{cs}(x),\beta),C^{cu}(x,\beta) = C(x,E^{cu}(x),\beta)$$

where $E^{cs}(x) = E^c(x) \oplus E^s(x), E^{cu}(x) = E^c(x) \oplus E^u(x).$

Then, $f$ is absolutely partially hyperbolic if, and only if, there is $0 < \beta < 1$ and constants
$$ 0<\mu_1< \lambda_2 \leq \mu_2 < \lambda_3 , \hspace{.3cm} \mu_1<1, \hspace{.3cm} \lambda_3>1$$
for which
\begin{equation}\label{2.11}
\begin{split}
Df^{-1}(x) (\cone{\tau}{}{x}{\beta}) & \subset  \cone{\tau}{}{f^{-1}(x)}{\beta}, \tau =s,cs ; \\
Df(x) (\cone{\Psi}{}{x}{\beta}) & \subset  \cone{\Psi}{}{f(x)}{\beta} ,\Psi=u,cu;
\end{split}
\end{equation}

and
\begin{equation}\label{2.12}
\begin{split}
||Df^{-1}(x) v|| & >  \mu_1^{-1}||v||, v \in C^{s}(x,\beta) ;\\
||Df^{-1}(x) v|| & >  \mu_2^{-1}||v||, v \in C^{cs}(x,\beta); \\
||Df(x) v|| & > \lambda_3 ||v||, v \in C^{u}(x,\beta) ;\\
||Df(x) v|| & >  \lambda_2||v||, v \in C^{cu}(x,\beta).
\end{split}
\end{equation}

For the linear case, we can get some more precise and interesting conclusions regarding the relation of the constants of (\ref{2.11}) and (\ref{2.12}) and the Lyapunov exponents of the function. In what follows, for a linear partially hyperbolic diffeomorphism, we find an explicit relation between the angle of the invariant cones families and the ratio of domination between unstable, stable and central bundles.

Consider $f: \toro{3} \ar \toro{3}$ a linear, volume preserving, partially hyperbolic diffeomorphism of $\toro{3}$. Denote by $\lambda^s,\lambda^c,\lambda^u$ its eigenvalues, where
$$|\lambda^s| < |\lambda^c| < |\lambda^u| , \hspace{.3cm} |\lambda^s| <1 < |\lambda^u|.$$ 

Put
\begin{equation} \label{THETA}
\Theta := \min \left\{ \frac{|\lambda^c|}{|\lambda^s|} , \frac{|\lambda^u|}{|\lambda^c|} \right\}
\end{equation}

Then we can choose a constant  $\beta > 0$ such that
\begin{equation}\label{BETA}
1< (1+\beta)^2 < \Theta.
\end{equation}

Therefore, by the definition of $\beta$, we have
$$ (1+\beta)|\lambda^s| < \frac{| \lambda^c|}{1+\beta} < (1+\beta) |\lambda^c| , \hspace{.3cm} (1+\beta)|\lambda^s| <1<\frac{|\lambda^u|}{1+\beta}.$$

Consequently we can find constants $\mu_1, \lambda_2, \mu_2, \lambda_3$ such that
\begin{equation} \label{ctes}
(1+\beta)|\lambda^s| < \mu_1 < \lambda_2 < \frac{| \lambda^c|}{1+\beta} < (1+\beta) |\lambda^c| < \mu_2 < \lambda_3 < \frac{|\lambda^u|}{1+\beta} , \hspace{.3cm} \mu_1 <1<\lambda_3.
\end{equation}

Now, it is straightforward to verify that with the constants defined by (\ref{BETA}) and (\ref{ctes}), the families of stable, unstable, center-stable and center-unstable cones satisfies (\ref{2.11}) and (\ref{2.12}). For example, if we take $v =v_s +v_{cu }\in C^{cu}(x,\beta)$ then
$$||Df(x)v_s|| = |\lambda^s| ||v_{s}|| \leq \beta |\lambda^s| ||v_{cu}|| < \beta|\lambda^c| ||v_{cu}|| \leq \beta ||Df(x)v_{cu}||$$

that is
$$Df(x) (C^{cu}(x,\beta)) \subset C^{cu}(f(x),\beta).$$

Furthermore,
$$||Df(x)v||^2 \geq ||Df(x)v_{cu}||^2 \geq |\lambda^c|^2 ||v_{cu}||^2.$$

But by (\ref{ctes}) we know that $|\lambda^c| > (1+\beta)\lambda_2$. So,

$$||Df(x)v||^2 > (1+\beta)^2 (\lambda_2)^2||v_{cu}||^2 \geq (\lambda_2)^2 (||v_{cu}||^2 + \beta^2 ||v_{cu}||^2) \geq \lambda_2^2 (\|v_{cu}\|^2 + \|v_s\|^2 )= (\lambda_2||v|| )^2$$

$$ \Rightarrow ||Df(x)v|| > \lambda_2 ||v||.$$

The argument for the other cones is similar.\\

\subsection{ Size of perturbation among absolutely partially hyperbolic diffeomorphisms}

\quad\\

Here we find an estimative for the size of the $C^1$-neighborhood of a linear partially hyperbolic automorphism inside absolutely partially hyperbolic diffeomorphisms.

\begin{lema} \label{parcialmente1}
Let $f: \toro{3} \ar \toro{3}$ be a linear partially hyperbolic diffeomorphism, volume preserving, with eigenvalues $\lambda^s, \lambda^c, \lambda^u$, where $|\lambda^s| < |\lambda^c| < |\lambda^u|$. Then, there is a constant $\varepsilon>0$   such that, for every diffeomorphism $g:\toro{3} \ar \toro{3}$ with $||g-Id ||_{C^1} < \varepsilon$ (adapted norm corresponding to $f$), the composition $f \circ g$ is an absolutely partially hyperbolic diffeomorphism.  The constant $\varepsilon$ depends only on  $\Theta:= \min \left\{ \frac{|\lambda^u|}{|\lambda^c|}, \frac{|\lambda^c|}{|\lambda^s|} \right\}$.
\end{lema}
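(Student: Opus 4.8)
The plan is to invoke the cone criterion for absolute partial hyperbolicity, conditions \eqref{2.11} and \eqref{2.12}, and to show that since these hold \emph{strictly} for the linear map $f$, they survive composition with a $C^1$-small perturbation. Because $f$ is linear, its derivative is a constant linear map $L:=Df$ preserving the orthogonal splitting $E^s\oplus E^c\oplus E^u$ in the adapted norm, so the cone families $\cone{\tau}{}{x}{\beta}$ are identical at every point. First I would fix $\beta$ as in \eqref{BETA}, with $(1+\beta)^2<\Theta$, together with the constants $\mu_1<\lambda_2<\mu_2<\lambda_3$ of \eqref{ctes}; the decisive feature is that each inequality in \eqref{ctes} holds with a definite gap controlled by $\beta$ and $\Theta$. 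The whole proof then reduces to choosing $\varepsilon$ small enough, depending only on $\beta$ and $\Theta$ (hence only on $\Theta$), so that all these strict inequalities persist.

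The core of the argument is the center--unstable cone; the unstable cone is analogous, and the stable and center--stable cones follow by applying the symmetric argument to $(f\circ g)^{-1}=g^{-1}\circ f^{-1}$. Writing $Dg(x)=\mathrm{Id}+R(x)$ with $\sup_x\|R(x)\|\le\|g-\mathrm{Id}\|_{C^1}<\varepsilon$, one has $D(f\circ g)(x)=L\,Dg(x)$. Given $v=v_s+v_{cu}\in\cone{cu}{}{x}{\beta}$, so that $\|v_s\|\le\beta\|v_{cu}\|$ and $\|v\|\le\sqrt{1+\beta^2}\,\|v_{cu}\|$, I would estimate the two components of the image separately. The stable component is $(L\,Dg(x)v)_s=\lambda^s\big(v_s+(R(x)v)_s\big)$, of norm at most $|\lambda^s|\,\|v_{cu}\|\,(\beta+\sqrt{1+\beta^2}\,\varepsilon)$, while the center--unstable component satisfies $\|(L\,Dg(x)v)_{cu}\|\ge|\lambda^c|\,\|v_{cu}\|\,(1-\sqrt{1+\beta^2}\,\varepsilon)$, since $L$ expands $E^{cu}$ by at least $|\lambda^c|$ and the perturbation only shrinks the dominant component by a small relative factor. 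Dividing, the cone ratio of the image is bounded by
$$
\frac{|\lambda^s|}{|\lambda^c|}\cdot\frac{\beta+\sqrt{1+\beta^2}\,\varepsilon}{1-\sqrt{1+\beta^2}\,\varepsilon},
$$
and as $|\lambda^s|/|\lambda^c|\le 1/\Theta$ this stays $\le\beta$ once $\varepsilon$ is small relative to $\beta$ and $\Theta$. Because $\Theta>1$, the resulting inequality leaves slack, so a \emph{larger} $\Theta$ permits a larger $\varepsilon$; this is exactly the monotone dependence recorded in Remark \ref{remarkimp}. The expansion estimate of \eqref{2.12} follows from the same lower bound, $\|L\,Dg(x)v\|\ge|\lambda^c|\,(1-\sqrt{1+\beta^2}\,\varepsilon)\,\|v\|/\sqrt{1+\beta^2}$, which exceeds $\lambda_2\|v\|$ for small $\varepsilon$ since $\lambda_2<|\lambda^c|/(1+\beta)\le|\lambda^c|/\sqrt{1+\beta^2}$ carries a gap absorbing the factor $1-\sqrt{1+\beta^2}\,\varepsilon$.

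The main obstacle is to make $\varepsilon$ depend \emph{only} on $\Theta$ and not on the individual sizes of the eigenvalues: in the application $|\lambda^u_k|\to\infty$, so any bound forcing $\varepsilon\lesssim|\lambda^c|/|\lambda^u|$ would be worthless. The resolution, visible in the estimate above, is to compare the image componentwise in \emph{relative} terms. The potentially large factor $|\lambda^u|$ arising from $L(R(x)v)$ never enters the denominator, because one bounds $\|(L\,Dg(x)v)_{cu}\|$ from below simply by the minimal expansion $|\lambda^c|$ of $L$ on $E^{cu}$ applied to $(Dg(x)v)_{cu}$, and the perturbation only disturbs this dominant component by the small relative amount $\sqrt{1+\beta^2}\,\varepsilon$. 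With this arranged, every inequality closes with a margin governed solely by $\beta$ and $\Theta$. Finally I would note that $\|g-\mathrm{Id}\|_{C^1}<\varepsilon<1$ makes $g$, and hence $f\circ g$, a diffeomorphism, and that $\|g^{-1}-\mathrm{Id}\|_{C^1}$ is controlled by a comparable quantity; applying the symmetric estimates to $L^{-1}$, whose relevant domination ratios are again measured by the same $\Theta$, completes the verification of \eqref{2.11}--\eqref{2.12} for $f\circ g$.
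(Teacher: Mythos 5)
Your proposal is correct and follows essentially the same route as the paper: both verify the cone criterion \eqref{2.11}--\eqref{2.12} by exploiting that the linear map preserves the cones with a strict margin governed only by the domination ratio $\Theta$ (via the choice of $\beta$ with $(1+\beta)^2<\Theta$), and then absorb the perturbation into that margin. Your componentwise estimate with $Dg=\mathrm{Id}+R$ in fact makes explicit the step the paper only asserts (that $\varepsilon$ can be chosen, depending only on the ratios, so that $Dg^{\pm1}$ maps the $\gamma\beta$-cones back into the $\beta$-cones), so it is a quantitatively fleshed-out version of the same argument.
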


\begin{remark} \label{remarkimp}
It is easy to see from the proof that $\varepsilon$ depends increasingly on $\Theta.$ Bigger $\Theta$ permits bigger perturbations. 
However we emphasize that the distance between $g$ and identity is considered in the adapted metric.  
\end{remark}
\begin{proof}
Let $f$ be as in the statement and denote the invariant splitting of $f$ by
$$T_xM = E^s(x) \oplus E^c(x) \oplus E^u(x).$$

Consider the adapted norm $||\cdot ||$ with respect to which the invariant splitting is orthogonal. Now, since $f$ is a linear  partially hyperbolic diffeomorphisms we can choose constants 
$$0 < \lambda_1 \leq \mu_1 < \lambda_2 \leq \mu_2 < \lambda_3 \leq \mu_3$$
$$ \mu_1 < 1 < \lambda_3$$
and a real value $\beta >0$ as in (\ref{BETA}) and (\ref{ctes}). 

For $v \in M$ we can write $v = v_s + v_c + v_u$ with $v_{\tau} \in E^{\tau}(x)$, $\tau=s,c,u$.

\begin{itemize}
\item If $v \in C^{u}(x,\beta)$ then $||v_{cs}|| \leq \beta ||v_u||$, where $v_{cs} = v_s + v_c$.
Thus,
$$||Df(x)v_{cs}|| < \mu_2 || Df^{-1} \circ Df(x) v_{cs}|| = \mu_2 ||v_{cs}|| \leq \mu_2 \beta ||v_u|| < \mu_2 \beta (\lambda_3)^{-1}||Df(x)v_u||.$$

$$\Rightarrow Df(x)v \in C^u(f(x),(\mu_2 / \lambda_3) \cdot \beta).$$

\item If $v \in C^{cu}(x,\beta)$ then $||v_s|| \leq \beta ||v_{cu}||$ where $v_{cu} = v_c + v_u$.
Thus,
$$||Df(x)v_{s}|| < \mu_1 || Df^{-1} \circ Df(x) v_{s}|| = \mu_1 ||v_{s}|| \leq \mu_1 \beta ||v_{cu}|| < \mu_1 \beta (\lambda_2)^{-1}||Df(x)v_{cu}||$$

$$ \Rightarrow Df(x) C^{cu}(x,\beta) \subset C^u(f(x),( \mu_1/\lambda_2 )\cdot \beta).$$

\item If $v \in E^s(x)$ then $||v_{cu}|| \leq \beta || v_s|| $. Thus,
$$ ||Df^{-1}(x)v_{cu}|| < \lambda_2^{-1} || Df\circ Df^{-1}(x) v_{cu}|| = \lambda_2^{-1} ||v_{cu}|| \leq \lambda_2^{-1} \beta ||v_s|| < \lambda_2^{-1} \beta \mu_1||Df^{-1}(x)v_s||$$

$$ \Rightarrow Df^{-1}(x) C^s(x,\beta) \subset C^{s}(f^{-1}(x),(\mu_1 / \lambda_2) \cdot \beta).$$

\item If $v\in E^{cs}(x)$ then $||v_u||\leq \beta||v_{cs}||$. Thus,
$$ ||Df^{-1}(x)v_{u}|| < \lambda_3^{-1} || Df\circ Df^{-1}(x) v_{u}|| = \lambda_3^{-1} ||v_{u}|| \leq \lambda_3^{-1} \beta ||v_{cs}|| < \lambda_3^{-1} \beta \mu_2 ||Df^{-1}(x)v_{cs}||$$

$$\Rightarrow Df^{-1}(x) C^{cs}(x,\beta) \subset C^{cs}(f^{-1}(x),(\mu_2 / \lambda_3) \cdot \beta).$$
\end{itemize}

Define
$$\gamma := \max \left\{\frac{\mu_2}{\lambda_3}, \frac{\mu_1}{\lambda_2} \right\} < 1.$$ 

So we have
\begin{eqnarray*}
Df^{-1}(x) (\cone{\tau}{}{x}{\beta}) & \subset & \cone{\tau}{}{f^{-1}(x)}{\gamma \cdot \beta }, \tau =s,cs ; \\
Df(x) (\cone{\Psi}{}{x}{\beta}) & \subset & \cone{\Psi}{}{f(x)}{\gamma \cdot \beta} ,\Psi=u,cu;
\end{eqnarray*}

Observe that by (\ref{BETA}) and (\ref{ctes}), $\beta$ and $\gamma$ depends only on the ratios $|\lambda^u| / |\lambda^c| , |\lambda^c| / |\lambda^s|$.

Now, since the invariant splitting is constant, we can take an $\varepsilon > 0$ depending only on the ratios $|\lambda^u| / |\lambda^c| , |\lambda^c| / |\lambda^s|$ such that, if $|| g -Id ||_{C^1} < \varepsilon$ then

\begin{equation}\label{hcone}
\begin{split}
Dg(x) C^{\tau}\left(x,\gamma \cdot \beta\right) & \subset C^{\tau}\left(g(x),\beta \right), \tau=u,cu \\
Dg^{-1}(x) C^{\Psi}\left(x,\gamma \cdot \beta\right) & \subset C^{\Psi}\left(g^{-1}(x), \beta\right), \Psi=s,cs
\end{split}
\end{equation}

and
$$\frac{l}{L} > \gamma  , \hspace{.3cm} L < \frac{1}{\mu_1},\hspace{.3cm} \frac{1}{\lambda_3} < l$$

with
$$l ||v|| \leq ||Dg(x)v|| \leq L||v||.$$

\begin{figure}
\includegraphics[scale=0.42]{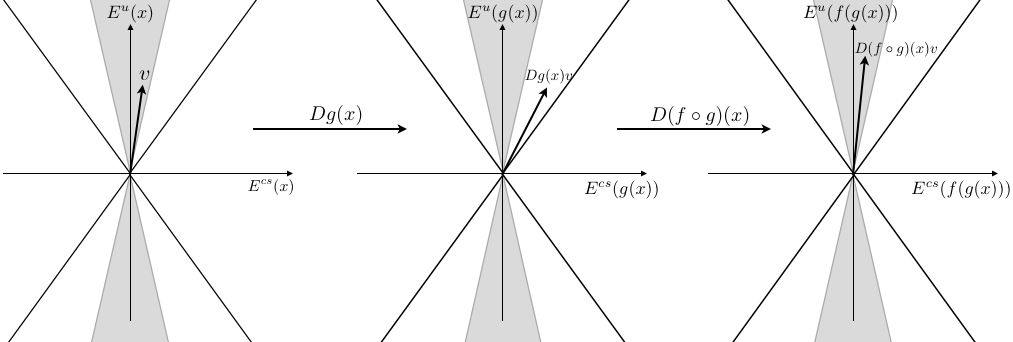}
\caption{The grey cones denotes the cones with angle $\gamma \cdot \beta$ and the wider ones are the cones with angle $\beta$.}
\end{figure}

Thus we have (see figure 1)
$$D(f\circ g)(x) (C^{\tau}(x,\gamma \cdot \beta)) \subset C^{\tau}(f(g(x)),\gamma \cdot \beta), \tau=u,cu ;$$
$$D(f \circ g)^{-1}(x) (C^{\Psi}(x,\beta)) \subset C^{\Psi}(g^{-1}(f^{-1}(x)),\beta), \Psi = s,cs.$$

Now, we need to show uniform contraction and expansion on these families.\\

\begin{itemize}
\item  If $v \in C^{u}(x,\gamma \cdot \beta)$ then
$$|| D(f \circ g ) (x) v|| \geq \lambda_3 ||Dg(x)v|| \geq \lambda_3 \cdot l \cdot ||v||. $$

\item If $v \in C^{cs}(x,\beta)$ then
$$||D(f\circ g)^{-1}(x) v|| \geq L^{-1} || Df^{-1}(x)v|| > L^{-1} \cdot \mu_2^{-1} ||v|| .$$

\item If $v \in C^{cu}(x,\beta\cdot \gamma)$ then
$$|| D(f \circ g ) (x) v|| \geq \lambda_2 ||Dg(x)v|| \geq \lambda_2 \cdot l \cdot ||v||. $$

\item If $v \in C^{s}(x,\beta)$ then
$$||D(f\circ g)^{-1}(x) v|| \geq L^{-1} || Df^{-1}(x)v|| > L^{-1} \cdot \mu_1^{-1} ||v|| .$$
\end{itemize}

Furthermore
$$0 < L \cdot \mu_1 < l \cdot \lambda_2 \leq L \cdot \mu_2 < l \cdot \lambda_3.$$

and
$$L \cdot \mu_1 < 1 \hspace{.3cm} , \hspace{.3cm}  l \cdot \lambda_3 >1,$$

so that $f \circ g$ is absolutely partially hyperbolic as we claimed.
\end{proof}


\bibliographystyle{plainyr}
\bibliography{Referencias}

\end{document}